\begin{document}

\title{Extension of order bounded operators}
%\subtitle{Do you have a subtitle?\\ If so, write it here}

%\titlerunning{Characterizations of Riesz spaces in the sense of S.O.B. operators}        % if too long for running head

\author{Kazem Haghnejad Azar}

\authorrunning{K. Haghnezhad Azar\MakeLowercase{}} % if too long for running head
\institute{K. Haghnezhad Azar \at Department  of  Mathematics  and  Applications, Faculty of Basic  Sciences, University of Mohaghegh Ardabili, Ardabil, Iran.\\
Email: haghnejad@uma.ac.ir}

\date{Received: date / Accepted: date}
% The correct dates will be entered by the editor

\maketitle

\begin{abstract}
Assume that a  normed lattice  $E$  is order dense majorizing of a vector lattice $E^t$.  There is an  extension norm $\Vert.\Vert_t$ for $E^t$  and we extend some lattice and topological  properties of normed lattice $(E,\Vert.\Vert)$ to new  normed lattice $(E^t,\Vert.\Vert_t$).  
 For a Dedekind complete Banach lattice $F$, $T^t$ is an extension of  $T$ from $E^t$ into $F$ whenever $T$ is an order bounded operator from $E$ into  $F$.  For each positive operator $T$, we have  $\Vert T\Vert=\Vert T^t\Vert$ and  we show that
 $T^t$ is a lattice homomorphism from $E^t$ into $F$  and  moreover   $T^t\in \mathcal{L}_n(E^t,F)$ whenever  $0\leq T\in \mathcal{L}_n(E,F)$ and $T(x\wedge y)=Tx \wedge Ty$ for each $0\leq x,y\in E$.
  We also extend some lattice and topological properties of $T\in \mathcal{L}_b(E,F)$  to the extension operator $T^t\in \mathcal{L}_b(E^t,F)$.
%%%%%%%%%%%%%%%%%%%%%%%%%%%%
\keywords{Order dense majorizing \and universal completion \and Vector lattice \and  Order bounded operator \and Positive extension operator}
% \PACS{PACS code1 \and PACS code2 \and more}
\subclass{47B65 \and 46B40 \and 46B42}
\end{abstract}

\section{Introduction and Preliminaries}
In the  section 1.2 of \cite{2},  authors have been studied  extension  operators on vector latticess. In this paper, we will study this problem in different way and we extend some results to general case.
Assume that  $E$ is a normed lattice and sublattice of $G$, and $E$ is  order dense majorizing of a vector lattice $E^t\subseteq G$. The aim of this manuscript are in the following:
\begin{enumerate}
\item 	We extend the norm from $E$ to $E^t$.
\item 	Assume that $T$ is an order bounded operator from $E$ into Dedekind complete normed lattice $F$.  $T^t$ is a linear extension of $T$, from $E^t$ into $F$, in the sense that if $S:E^t\rightarrow F$ is any operator that extends $T$ by same way, then $T^t=S$.
\item	We also extend some lattice and topological properties from $E$ and $T$ for $E^t$ and $T^t$, respectively.
\end{enumerate}

To state our result, we need to fix some notation and recall some definitions.
A Banach lattice $E$ has order continuous norm if $\| x_\alpha\|\rightarrow 0$ for every decreasing net $(x_\alpha)_\alpha$ with $\inf_\alpha x_\alpha=0$.
Let $E$, $F$ be Riesz spaces. An operator $T:E\rightarrow F$ is said to be order bounded if it maps each order bounded subset of $E$ into order bounded subset of $F$. The collection of all order bounded operators from a Riesz space $E$ into a Riesz space $F$ will be denoted by $\mathcal{L}_b(E,F)$.  A linear operator between two  Riesz spaces is order continuous (resp. $\sigma$-order continuous) if it maps order null nets (resp. sequences) to order null nets (resp. sequences). The collection of all order continuous (resp. $\sigma$-order continuous) linear operators from vector lattice $E$ into vector lattice $F$ will be denoted by $\mathcal{L}_n(E,F)$ (resp. $\mathcal{L}_c(E,F)$). Let us say that a vector subspace $G$ of an ordered vector space $E$ is majorizing $E$ whenever for each $x\in E$ there exists some $y\in G$ with $x\leq y$. A vector sublattice $G$ of vector lattice $E$ is said to be order dense in $E$ whenever for each $0<x\in E$ there exists some $y\in G$ with $0<y\leq x$. A Dedekind complete vector lattice $E$ is said to be a Dedekind completion of the vector lattice $G$ whenever $E$ is lattice isomorphism to a majorizing order dense sublattice of $E$. A subset $A$ of a vector lattice $E$ is said to be order closed if it follows from $\{x_\alpha\}\subseteq A$ and $x_\alpha\xrightarrow{o} x$ in $E$ that $x\in A$.
A lattice norm $\Vert .\Vert$ on a vector lattice $E$ is said to be a Fatou norm (or that  $\Vert .\Vert$ satisfies the Fatou property) if $0\leq x_\alpha\uparrow x$ in $E$ implies $\Vert x_\alpha \Vert\uparrow \Vert\ x\Vert$.
 $\sigma$-Fatou norm has similar definition. An operator $T:E\rightarrow E$ on a vector lattice is said to be band preserving whenever $T$ leaves all bands of $E$ invariant, i.e., whenever $T(B)\subseteq B$ holds for each band $B$ of $E$. An operator $T:E\rightarrow F$ between two vector lattices is said to preserve disjointness whenever $x\perp y$ in $E$ implies $Tx\perp Ty$ in $F$.
For a normed lattice $E$, $E^\prime$ is the its order dual and $\sigma (E,E^\prime )$ is the weak topology for $E$.
For unexplained terminology and facts on  Banach lattices and positive operators, we refer the reader to \cite{1,2}.

\section{Extension of norm to an order completion}
Let $E$ be an Archimedean vector lattice. Then there exists a Dedekind complete vector lattice $E^\delta$ contains a majorizing order dense vector subspace that is Riesz isomorphism to $E$, which we identify with $E$. 
 $E^\delta$ is called the Dedekind completion of $E$.  
Throughout this  manuscript,  we will assume that the vector lattices under consideration are Archimedean.
   $E$ and $G$ are normed lattice  and  vector lattice, respectively, in which  $E$ is order dense  majorizing of $G$. The universal completion of a vector lattice $E$ will be denoted by $E^u$. By Theorem 7.21 \cite{1}, every  vector lattice has a unique universal completion. 
\begin{theorem}\label{t2.1}
For each $x\in G$, $\rho(x)=\sup\{\Vert z\Vert:~z\leq \vert x\vert ,~z\in E^+\}$ is a norm for $G$ and moreover $(G, \rho(x))$ is a normed lattice. 
\end{theorem}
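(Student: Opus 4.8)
The plan is to verify the norm axioms for $\rho$ one at a time and then the lattice property, invoking the order denseness and the majorizing hypotheses only at the two places where each is genuinely needed. First I would record that $\rho$ is finite and nonnegative. Nonnegativity is immediate, since $0\in E^+$ is always a competitor, so $\rho(x)\ge 0$. For finiteness, the majorizing hypothesis supplies, for a fixed $x\in G$, an element $w_0\in E$ with $|x|\le w_0$ (necessarily $w_0\in E^+$); then every competitor $z\in E^+$ with $z\le|x|$ satisfies $0\le z\le w_0$ in $E$, whence $\|z\|\le\|w_0\|$ because $\|\cdot\|$ is a lattice norm, and therefore $\rho(x)\le\|w_0\|<\infty$.

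Next, definiteness. If $x=0$ the only competitor is $z=0$, so $\rho(0)=0$. Conversely, if $x\ne 0$ then $|x|>0$, and order denseness of $E$ in $G$ produces $z\in E$ with $0<z\le|x|$; since $\|\cdot\|$ is a norm on $E$ this gives $\rho(x)\ge\|z\|>0$. Homogeneity is routine: for $\lambda\ne 0$ the scaling $z\mapsto|\lambda|z$ is a bijection from the competitor set of $x$ onto that of $\lambda x$ (using $|\lambda x|=|\lambda|\,|x|$) which multiplies norms by $|\lambda|$, so $\rho(\lambda x)=|\lambda|\rho(x)$, and $\lambda=0$ is trivial. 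The lattice property is equally direct: if $|x|\le|y|$ then every competitor for $x$ is a competitor for $y$, so $\rho(x)\le\rho(y)$; and $\rho(|x|)=\rho(x)$ because the definition of $\rho$ involves $x$ only through $|x|$. Consequently, once $\rho$ is shown to be a norm, $(G,\rho)$ is automatically a normed lattice.

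The real content is the triangle inequality, and this is the step I expect to be the main obstacle. Since $|x+y|\le|x|+|y|$ and $\rho$ is monotone on the positive cone, it suffices to prove $\|w\|\le\rho(x)+\rho(y)$ for every $w\in E^+$ with $w\le|x|+|y|$. The tempting move is to split $w$ against $|x|$ and $|y|$ by the Riesz decomposition property, but the difficulty is that the splitting $w=w_1+w_2$ with $0\le w_1\le|x|$ and $0\le w_2\le|y|$ lives in $G$, so $w_1,w_2$ need not lie in $E$ and their $\|\cdot\|$-norms are not even defined. To stay inside $E$, I would instead fix competitors $s,t\in E^+$ with $s\le|x|$ and $t\le|y|$ and work with the truncation $w\wedge(s+t)$, which does lie in $E$ because $E$ is a sublattice. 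Applying the Riesz decomposition property \emph{inside} $E$ to $0\le w\wedge(s+t)\le s+t$ yields $w\wedge(s+t)=p+q$ with $0\le p\le s\le|x|$, $0\le q\le t\le|y|$ and $p,q\in E$, so that $\|w\wedge(s+t)\|\le\|p\|+\|q\|\le\rho(x)+\rho(y)$.

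It remains to remove the truncation. By order denseness the sets $\{s\in E^+:s\le|x|\}$ and $\{t\in E^+:t\le|y|\}$ are upward directed with suprema $|x|$ and $|y|$, so $s+t$ increases to $|x|+|y|\ge w$ and hence $w\wedge(s+t)\uparrow w$ in $E$ (using order continuity of the lattice operations). The crux is now to pass the uniform bound $\|w\wedge(s+t)\|\le\rho(x)+\rho(y)$ to $\|w\|\le\rho(x)+\rho(y)$, that is, to deduce $\|w\wedge(s+t)\|\uparrow\|w\|$ from $w\wedge(s+t)\uparrow w$. This is precisely a Fatou-type continuity of $\|\cdot\|$ along increasing nets, and it is the one point at which the bare lattice-norm axioms do not obviously suffice; I expect the argument to hinge exactly on this, the property being either assumed for $(E,\|\cdot\|)$ or teased out of the order-density setup, and this is where I would concentrate the effort. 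Granting it, the triangle inequality follows and the proof is complete.
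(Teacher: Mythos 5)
You have handled the routine axioms (finiteness via the majorizing hypothesis, definiteness via order density, homogeneity, monotonicity) carefully and correctly, and you have correctly located the sensitive point: in the triangle inequality, the Riesz decomposition of a competitor $w\le\vert x\vert+\vert y\vert$ produces components in $G$, not in $E$. But your workaround does not close the argument. After truncating to $w\wedge(s+t)\in E$ and decomposing inside $E$, you obtain the uniform bound $\Vert w\wedge(s+t)\Vert\le\rho(x)+\rho(y)$, and to finish you need $\Vert w\Vert=\sup_{s,t}\Vert w\wedge(s+t)\Vert$, i.e.\ a Fatou property of $\Vert\cdot\Vert$ along the increasing net $w\wedge(s+t)\uparrow w$. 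That property is not among the hypotheses of the theorem --- the paper itself treats the Fatou property and order continuity as \emph{additional} assumptions later on (Theorem~\ref{T:2.5}, Theorem~\ref{2.4}(4)) --- and it cannot be ``teased out'' of order density: a bare lattice norm gives no control of $\Vert w\Vert-\Vert w\wedge(s+t)\Vert$ in terms of the order convergence $(w-(s+t))^+\downarrow 0$. So as written your proposal establishes the triangle inequality only modulo an unstated Fatou-type hypothesis, which you acknowledge; that is a genuine gap.

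For comparison, the paper keeps the decomposition $z=z_1+z_2$ in $G$ with $\vert z_1\vert\le\vert x\vert$, $\vert z_2\vert\le\vert y\vert$, and then invokes order density to produce $w_1,w_2\in E^+$ with $\vert z_1\vert\le w_1\le\vert x\vert$ and $\vert z_2\vert\le w_2\le\vert y\vert$, whence $z\le w_1+w_2$ and $\Vert z\Vert\le\Vert w_1\Vert+\Vert w_2\Vert\le\rho(x)+\rho(y)$. Note, however, that this sandwiching step uses more than the stated definition of order density (which only supplies \emph{some} element of $E$ below a positive element of $G$, not an element of $E$ interpolated between $\vert z_1\vert$ and $\vert x\vert$), so the difficulty you isolated is real and is exactly the place where the paper's own argument leans hardest. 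To complete a proof along your lines you would have to either justify such an interpolation property for the pair $(E,G)$ or add a Fatou-type hypothesis; neither comes for free from the stated assumptions.
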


\begin{proof}
It is clear that $\rho(x)=0$ if and only if $x=0$, and $\rho(\lambda x)=\vert\lambda\vert \rho(x)$ for each real number $\lambda$ and $x\in G$. Now we prove that $\rho( x+y)\leq \rho( x)+\rho( y)$ whenever $x,y\in E^t$. \\
Let $x, y\in E^t$. Fix  $z\in E^+$ such that $z\leq \vert x+y\vert$ . By Riesz Decomposition property, Theorem 1.10 \cite{1}, there are $z_1,z_2\in G$ such that  $\vert z_1\vert\leq \vert x\vert $,  $\vert z_2\vert\leq \vert y\vert $ and   $z=z_1+z_2$. Since $E$ is order dense in $G$, there are $w_1,w_2\in E^+$ such that   $\vert z_1\vert\leq w_1\leq\vert x\vert $ and  $\vert z_2\vert\leq w_2\leq \vert y\vert $.  It follows that 
\begin{align*}
z=\vert z_1+z_2\vert\leq \vert z_1\vert+\vert z_2\vert\leq w_1+w_2\leq \vert x\vert+\vert y\vert.
\end{align*}
Then we have
\begin{align*}
\Vert z\Vert \leq \Vert w_1+w_2\Vert  \leq \Vert w_1\Vert +\Vert w_2\Vert\leq \rho(x)+\rho(y).
\end{align*}
Consequently, we have $\sup\{\Vert z\Vert:~{z\leq \vert x+y\vert}~\text{and}~z\in E^+\}
\leq \rho(x)+\rho(y)$, which implies that $\rho(x+y)\leq \rho(x)+\rho(y)$.
\end{proof}   
   
\vspace{0.1cm}   
   
For a normed lattice $(E,\Vert.\Vert)$, assume that  $E^\rho$  is including members of $x\in G$ which satisfies in the following equality,
\begin{equation}
\inf \{\Vert y\Vert:~\vert x\vert\leq y,~y\in E^+\}=\sup\{\Vert z\Vert:~z\leq \vert x\vert ,~z\in E^+\}
\end{equation}
 then  $\rho$ is defined  a  real function from  $E^\rho$ into $[0,+\infty)$, in the sense $\rho(x)$ equal to $(1)$. Obviously that the function $\rho$ satisfies in the following conditions:
\begin{enumerate}
\item $\rho(x)=0$ iff $x=0$
\item $\rho(\lambda x)=\lambda\rho(x)$ for each $\lambda\in \mathbb{R}$ and $x\in E^\rho$.
\item $\rho(x+y)\leq \rho (x)+\rho(y)$, whenever $x+y\in E^\rho$ for  $x,y\in E^\rho$.
\end{enumerate}

\begin{example}\label{E:2.2}
Let $c$ be the collection of all  real number sequences which convergence in $\mathbb{R}$ with $\ell^\infty$-norm. It is obvious that $c$ is order dense  majorizing of $\ell^\infty$. By easy calculation, we can prove that $c^\rho =\ell^\infty$. 
\end{example}

The above example make motivation to us for the following question.
\begin{question}
Is $E^\rho$ vector lattice and $(E^\rho, \rho)$ normed lattice?
\end{question}

It is important to us to know that when $E^\rho$ is a vector lattice and moreover $(E^\rho, \rho)$ is a normed lattice?   Now in the following we define a vector lattice $E^t\subseteq G$ which is including normed lattice $E$ with extension norm of $E$.

\begin{definition}
Assume that  $E\subseteq E^t$  is a   vector sublattice of $G$   in which every element of $E^t$ satisfies in the  equality $(1)$. 
Then  $\rho (x)=\Vert x\Vert_t$ is defined a norm for $E^t$ in the sense equal to $(1)$ which is called $t$-norm for $E^t$.
\end{definition}

It is easy to show that $\Vert .\Vert_t$ is a norm for $E^t$, and moreover $(E^t, \Vert .\Vert_t)$ is a normed lattice.  Note that $E^t$ is not unique and we have $E\subseteq E^t\subseteq G$. The aim of this manuscript to find a vector lattice $E^t$ in the  different from of $E$. So, when we say that $E^t$ exists, that is, $E$ is proper sublattice of $E^t$. Now in the Theorems  \ref{t2.1} and \ref{T:2.5}  we show that $E^t=G$ whenever $E$ is a Dedekind complete or has order continuous norm, respectively.

\begin{theorem}\label{T:2.5}
 By one of the following conditions, the equality $(1)$ holds for each $x\in G$, that is,  $E^t=G$, $(G,\Vert .\Vert_t)$ is normed lattice and 
  $\Vert y\Vert=\Vert y\Vert_t$ for each $y\in E$.
\begin{enumerate}
\item  $E$ is a Dedekind complete.
\item   $E$ has order continuous norm.
\end{enumerate}
\end{theorem}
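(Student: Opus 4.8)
The plan is to observe first that in $(1)$ the inequality $\sup\{\|z\|:z\le|x|,\,z\in E^+\}\le\inf\{\|y\|:|x|\le y,\,y\in E^+\}$ holds automatically: both sets are nonempty ($0$ lies in the first, and $E$ majorizes $G$ for the second), and whenever $z\le|x|\le y$ with $z,y\in E^+$ lattice-norm monotonicity gives $\|z\|\le\|y\|$, so taking the supremum over $z$ and the infimum over $y$ yields the claim. Hence in either case it suffices to prove the reverse inequality. Granting $(1)$ for every $x\in G$, one has $E^t=G$ with $\|\cdot\|_t=\rho$ on all of $G$, so $(G,\|\cdot\|_t)$ is a normed lattice directly by Theorem \ref{t2.1}; and for $u\in E$, choosing in $(1)$ the admissible vector $|u|\in E^+$ in both the supremum and the infimum and using $\|\,|u|\,\|=\|u\|$ shows that both sides of $(1)$ equal $\|u\|$, i.e.\ $\|u\|_t=\|u\|$.

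For the first condition ($E$ Dedekind complete) I would prove the stronger statement $E=G$, from which $(1)$ is immediate. Let $0<x\in G$ and set $A=\{z\in E:0\le z\le x\}$. Because $E$ majorizes $G$, $A$ is bounded above in $E$, so $s:=\sup_E A$ exists by Dedekind completeness, while the standard description of order-dense Riesz subspaces (\cite{1}) gives $x=\sup_G A$. As $s$ is an upper bound of $A$ in $G$ we get $x\le s$; were $x<s$, order density would furnish $w\in E$ with $0<w\le s-x$, and then $s-w\in E$ would be an upper bound of $A$ strictly below $s$, contradicting $s=\sup_E A$. Thus $s=x\in E$, and applying this to $x^+$ and $x^-$ shows every element of $G$ lies in $E$, so $E=G$ and $(1)$ holds trivially.

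For the second condition ($E$ has order continuous norm) I argue $(1)$ directly, with no reduction to $E=G$. Fix $x\in G$ and write $A=\{z\in E^+:z\le|x|\}$ and $B=\{y\in E^+:|x|\le y\}$, which are respectively upward and downward directed. Order density gives $\sup_G A=|x|$, and applying the same characterization to the positive element $y_0-|x|$ for a fixed $y_0\in B$ (approximating it from below inside $E$ and translating by $y_0$) yields $\inf_G B=|x|$. Consider the net $(y-z)$ indexed by $A\times B$ with $z$ increasing and $y$ decreasing: it decreases in $E$ and $\inf_G(y-z)=0$, since $\inf_y(y-z)=|x|-z$ and then $\inf_z(|x|-z)=0$; as each $y-z\in E$ and $0\in E$, also $\inf_E(y-z)=0$. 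Order continuity of the norm then forces $\|y-z\|\to0$ along the net, so given $\varepsilon>0$ one may choose $z\in A,\ y\in B$ with $\|y-z\|<\varepsilon$, whence $\|y\|\le\|z\|+\|y-z\|<\rho(x)+\varepsilon$; taking the infimum over $B$ and letting $\varepsilon\to0$ gives $\inf\{\|y\|:y\in B\}\le\rho(x)$, the required reverse inequality.

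The step that needs the most care is the upper approximation $\inf_G B=|x|$: this is not the definition of order density, which only provides approximation from below, and I obtain it by applying the sup-characterization to $y_0-|x|\ge0$ and translating. A companion subtlety is the iterated-infimum computation giving $\inf_G(y-z)=0$ for the two-parameter net and, above all, the observation that this infimum is attained at $0\in E$, so that the order-continuity hypothesis, which is phrased for nets whose infimum in $E$ equals $0$, genuinely applies; were the limit to lie outside $E$ the hypothesis could not be invoked. In the first case the only delicate verification is that $s=\sup_E A$ computed in the Dedekind complete space $E$ coincides with the value $x$ forced by order density in $G$, which is precisely what the strict-inequality contradiction rules out.
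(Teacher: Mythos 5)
Your proof is correct. For condition (2) it follows essentially the paper's own route: both arguments view $A=\{z\in E^+:z\le|x|\}$ and $B=\{y\in E^+:|x|\le y\}$ as monotone nets with $z\uparrow|x|$ and $y\downarrow|x|$, deduce that the two-parameter net $y-z$ decreases to $0$ in $E$, and invoke order continuity of the norm to squeeze $\inf_{B}\|y\|$ against $\sup_{A}\|z\|$; your version merely spells out the downward approximation $\inf_G B=|x|$ (via the sup-characterization applied to $y_0-|x|$ and a translation), a point the paper delegates to Theorem 1.34 of \cite{2}. For condition (1) you take a genuinely different, and in fact sounder, route. The paper argues by contradiction on the norm: it forms $y_0=\inf_E\{y\in E^+:|x|\le y\}$ and asserts that $z_0=(1-\tfrac1n)y_0$ still dominates $|x|$ for large $n$; that step is not justified, since from $|x|\le y_0$ (or even $|x|<y_0$) one cannot conclude $|x|\le(1-\tfrac1n)y_0$ in a general vector lattice, and indeed $y_0$ should equal $|x|$ rather than strictly dominate it. You instead prove the stronger structural fact that a Dedekind complete, order dense, majorizing sublattice of an Archimedean $G$ must be all of $G$ (the comparison of $\sup_E A$ with $\sup_G A$ and the $s-w$ contradiction are exactly right, using the standing Archimedean hypothesis), after which equality $(1)$ is immediate because $|u|$ itself is admissible on both sides. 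This buys a clean, gap-free argument plus the extra information $E=G$, at the cost of revealing that case (1) of the theorem provides no genuine extension beyond $E$ itself.
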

\begin{proof}
\begin{enumerate}
\item By Theorem \ref{t2.1}, we know that  $\Vert x\Vert_t=\sup\{\Vert z\Vert:~z\leq \vert x\vert ,~z\in E^+\}$ is a norm for vector lattice $G$. 
By contradiction, assume that 
\begin{align*}
\Vert x\Vert_t\neq\inf \{\Vert y\Vert:~\vert x\vert\leq y,~y\in E^+\}.
\end{align*}
 Consider $\lambda$ is a real number such that 
\begin{align*}
\Vert x\Vert_t<\lambda <\inf \{\Vert y\Vert:~\vert x\vert\leq y,~y\in E^+\}.
\end{align*}
Let $A=\{y\in E^+:~\vert x\vert\leq y\}$. Since $E$ is order dense in $G$, $A$ is bounded below, and so $A$ has infimum in $E$, by Dedekind completeness of $E$. Take $\inf A=y_0$ where $y_0\in E$. It is clear that $\vert x\vert<y_0$ and $\lambda <\Vert y_0\Vert$. Let the natural number $n$ be enough large such that  $\lambda <\Vert y_0\Vert-\frac{1}{n}\Vert y_0\Vert$.
Put $z_0=(1-\frac{1}{n})y_0$. Consequently we have  $z_0\in A$ and $\Vert z_0\Vert<\Vert y_0\Vert$, which is impossible. 
\item First we show that 
\begin{equation*}
\inf \{\Vert y\Vert:~\vert x\vert\leq y,~y\in E\}=\sup\{\Vert z\Vert:~z\leq \vert x\vert,~z\in E\},
\end{equation*}
holds whenever $x\in G$. Set $A=\{z\leq \vert x\vert:~z\in E^+\}$ and $B=\{y\geq \vert x\vert:~y\in E\}$. Since $E$ is order dense and majorizing of  $G$, it follows that $A$ and $B$ are not empty and they are directed sets. We consider the set $A$ as a net $\{z_\alpha\}$, where $z_\alpha=\alpha$ for each $\alpha\in A$. In the same way we consider $B=\{y_\beta\}$, and by using Theorem 1.34 \cite{2}, we can write  
$z_\alpha\uparrow \vert x\vert$ and $y_\beta \downarrow\vert x\vert$. Since 
$z_\alpha\leq \vert x\vert\leq y_\beta$ for each $\alpha$ and $\beta$, it follows that $y_\beta -z_\alpha\downarrow 0$, and so  $0\leq\Vert  y_\beta\Vert -\Vert z_\alpha\Vert\leq\Vert  y_\beta -z_\alpha\Vert\rightarrow 0$. It follows that 
$\Vert x\Vert_t=\inf \Vert  y_\beta\Vert=\sup\Vert z_\alpha\Vert$. Obvious that $\Vert .\Vert_t$ is a norm for $G$ and $(G, \Vert .\Vert_t)$ is a normed lattice.
\end{enumerate}
\end{proof} 

\vspace{0.2cm}

In the Example \ref{E:2.2}, we see that $c^t=\ell^\infty$, but $c$ is not Dedekind complete and has not continuous norm. On the other hand, Theorem  \ref{T:2.5}, make a motivation to us that  we can extend the norm of $E$ for vector lattice $E^t$ in different cases. On the other hand, it is important to know that which time we have $({E^t})^t=E^t$. In the following example, we show that $E^t$ exists whenever $E$ satisfies in Fatou property. Note that by Example 4.3 and 4.4 from \cite{1}, we know that every normed lattice with Fatou property, in general sense has not order continuous norm or Dedekind complete. 
\begin{example}
By Theorem 4.12 \cite{1}, if $(E,\Vert.\Vert)$ satisfies the Fatou property, the Dedekind completion of $E$, $E^\delta$ is a normed space with $\delta-norm$. Let $E$ be the vector lattice of all real-valued functions defined on an infinite set $X$ whose range is finite, with the pointwise ordering and satisfies the Fatou property. It can be seen that $E$ is not Dedekind complete   and $E^\delta=\ell^\infty(X)$.
\end{example}

  Now in the following we introduce an important lemma that has many applications in every parts of this paper.
\begin{lemma} \label{2.1}
Let $E$ has order continuous norm.
For each $0\leq x\in E^t$, there are sequences $\{x_n\}\subseteq E^+$ and $\{y_n\}\subseteq E^+$ such that $x_n\uparrow x$,  $x_n\xrightarrow{\Vert \Vert_t}x$, $y_n\downarrow x$ and  $y_n\xrightarrow{\Vert \Vert_t}y$.
\end{lemma}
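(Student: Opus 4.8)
The plan is to reduce everything to the directed nets already appearing in the proof of Theorem \ref{T:2.5}(2) and then to extract countable monotone subfamilies from them. Since $E$ has order continuous norm, Theorem \ref{T:2.5}(2) gives $E^t=G$, so for a fixed $0\le x\in E^t$ I may set $A=\{z\in E^+:z\le x\}$ and $B=\{y\in E^+:y\ge x\}$ (here $y\ge x\ge 0$ automatically forces $y\in E^+$). Viewing these as nets $z_\alpha\uparrow x$ and $y_\beta\downarrow x$ exactly as in that proof, I have $y_\beta-z_\alpha\downarrow 0$ in $E$, and order continuity of $\Vert\cdot\Vert$ on $E$ yields $\Vert y_\beta-z_\alpha\Vert\to 0$ along the product net $A\times B$.

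First I would extract the sequences. Because the real net $(\alpha,\beta)\mapsto\Vert y_\beta-z_\alpha\Vert$ is decreasing to $0$, for each $n$ there is an index past which it stays below $1/n$; choosing inductively $\alpha_n$ increasing in $A$ and $\beta_n$ decreasing in $B$, each dominating the previous indices together with such an index, I obtain $x_n:=z_{\alpha_n}\in E^+$ increasing with $x_n\le x$, and $y_n:=y_{\beta_n}\in E^+$ decreasing with $y_n\ge x$, satisfying $\Vert y_n-x_n\Vert<1/n$ for every $n$.

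Next I would get the norm convergence. Since $x_n\le x\le y_n$, the positive elements $x-x_n$ and $y_n-x$ are both dominated by $y_n-x_n\in E$; monotonicity of the lattice norm $\Vert\cdot\Vert_t$ on $E^t$ together with the identity $\Vert\cdot\Vert_t=\Vert\cdot\Vert$ on $E$ then gives $\Vert x-x_n\Vert_t\le\Vert y_n-x_n\Vert<1/n$ and $\Vert y_n-x\Vert_t\le\Vert y_n-x_n\Vert<1/n$, so that $x_n\xrightarrow{\Vert\cdot\Vert_t}x$ and $y_n\xrightarrow{\Vert\cdot\Vert_t}x$ (the ``$y$'' in the statement being a misprint for $x$).

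Finally, the order relations $x_n\uparrow x$ and $y_n\downarrow x$ I would verify directly from the norm estimates, avoiding any appeal to Dedekind completeness of $G$. For any upper bound $v$ of $\{x_n\}$, the element $x\wedge v$ is again an upper bound lying below $x$, so $0\le x-(x\wedge v)\le x-x_n$ for all $n$, whence $\Vert x-(x\wedge v)\Vert_t=0$, forcing $x\wedge v=x$, i.e. $v\ge x$; thus $x=\sup_n x_n$. The dual argument, using $x\vee v\le y_n$, shows $x=\inf_n y_n$. I expect the main obstacle to be precisely this passage from nets to monotone sequences while retaining norm control, and the key realization that order convergence to $x$ can be read off from lattice--norm monotonicity (a vanishing $t$-norm forcing the vector to vanish) without knowing the supremum or infimum in advance; order continuity of $\Vert\cdot\Vert$ on $E$ is invoked only once, to convert the order-null net $y_\beta-z_\alpha$ into a norm-null one.
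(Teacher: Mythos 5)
Your proposal is correct and follows essentially the same route as the paper: both build the monotone sequences by extracting cofinal indices from the order-dense nets $z_\alpha\uparrow x$ and $y_\beta\downarrow x$ of Theorem~1.34, use order continuity of the norm on $E$ to make $\Vert y_\beta-z_\alpha\Vert\to 0$, and then recover $\sup_n x_n=x$ (resp.\ $\inf_n y_n=x$) from the vanishing of the $t$-norm. Your verification of the supremum via $x\wedge v$ for an \emph{arbitrary} upper bound $v$ is in fact slightly more complete than the paper's, which only tests upper bounds $w$ with $x_n\le w\le x$, but the underlying argument is the same.
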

\begin{proof}
 Choose $\{r_n\}\subseteq \mathbb{R}^+$ and $\{x_n\}\subseteq E^+$ satisfies in the following conditions:
\begin{enumerate}
\item $r_n\downarrow 0$,
\item $x_n\in  \{z\in E:~z\leq x~\text{and}~\Vert x-z\Vert_t<r_n\}$,
\item $ x_n\uparrow x$,
\end{enumerate}
for each $n\in \mathbb{N}$. The reason  of the above claim as follows:\\
By using Theorem 1.34 \cite{2},  take $A=\{z\leq  x:~z\in E^+\}=\{z_\alpha\}$ and $B=\{y\geq  x:~y\in E\}=\{y_\beta\}$ such that 
$z_\alpha\uparrow  x$ and $y_\beta \downarrow x$. 
Then $z_\alpha \leq x \leq y_\beta$ holds for each $\alpha$ and $\beta$. Thus $\Vert x-y_\beta\Vert$, $\Vert x-z_\alpha\Vert\leq \Vert z_\alpha -y_\beta\Vert\rightarrow 0$. Let $0<r_1\in\mathbb{R}$.  Then there exist  $z_1\in\{z\in A:~\Vert x-z\Vert_t\leq r_1\}$ and $0<r_2<Min\{r_1,~\Vert z_1-x\Vert_t\}$. We choose $z_2,z_3,...z_n$ and $z_{n+1}\in\{z\in A:~\Vert x-z_{n}\Vert_t\leq r_n\}$ where $0<r_n<Min\{r_{n-1},~ \Vert z_{n-1}-x\Vert_t\}$. We define $x_n=\vee_{i=1}^n z_i$.
Now, if $x_n\leqslant w\leqslant x$ for each $n\in \mathbb{N}$, then $0\leqslant x-w\leqslant x-x_n\leq x-z_n$. It follows that 
$\Vert x-w\Vert_t\leqslant \Vert x-x_n\Vert_t\leqslant \Vert x-z_n\Vert_t \leqslant r_n\downarrow 0$. Thus $x=w$, and so $\sup x_n=x$. Therefore $x_n\uparrow x$ and $\Vert x_n-x\Vert\rightarrow 0$. Existence of $\{y_n\}$ has the same argument.
\end{proof}

\begin{theorem}\label{t2.3}
Let $E$ has order continuous norm.
Then we have the following assertions.
\begin{enumerate}
\item If $E$ is a $KB$-pace, then so is $E^t$.
\item If $E$ is an $AL-$space, then so is $E^t$.
\end{enumerate}
\end{theorem}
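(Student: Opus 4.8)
The plan is to reduce both statements to the corresponding property of $E$ by approximating positive elements of $E^t$ with elements of $E^+$, using Lemma \ref{2.1}, and then transferring either the norm identity or the monotone-convergence property through the $t$-norm. Since $E$ has order continuous norm, Theorem \ref{T:2.5} gives $E^t=G$ and, crucially, $\Vert y\Vert_t=\Vert y\Vert$ for every $y\in E$; this identification is exactly what allows the hypotheses on $(E,\Vert.\Vert)$ to speak about $(E^t,\Vert.\Vert_t)$.

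For the AL-space claim, I would fix $u,v\in (E^t)^+$ and invoke Lemma \ref{2.1} to produce $u_n\uparrow u$ and $v_n\uparrow v$ with $\{u_n\},\{v_n\}\subseteq E^+$ and $u_n\xrightarrow{\Vert\Vert_t}u$, $v_n\xrightarrow{\Vert\Vert_t}v$. Then $u_n+v_n\to u+v$ in $t$-norm, while the AL-identity in $E$ together with $\Vert.\Vert_t=\Vert.\Vert$ on $E$ gives $\Vert u_n+v_n\Vert_t=\Vert u_n\Vert_t+\Vert v_n\Vert_t$ for each $n$. Letting $n\to\infty$ and using continuity of the norm yields $\Vert u+v\Vert_t=\Vert u\Vert_t+\Vert v\Vert_t$, the defining AL-identity on $(E^t)^+$.

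For the KB-space claim, I would start from an increasing sequence $0\leq u_1\leq u_2\leq\cdots$ in $E^t$ with $\Vert u_n\Vert_t\leq M$ and show it converges in $t$-norm. For each $n$, Lemma \ref{2.1} supplies $v_n\in E^+$ with $v_n\leq u_n$ and $\Vert u_n-v_n\Vert_t<2^{-n}$; replacing $v_n$ by $w_n:=v_1\vee\cdots\vee v_n$ keeps $w_n\in E^+$, makes $\{w_n\}$ increasing, preserves $w_n\leq u_n$ (hence $\Vert w_n\Vert=\Vert w_n\Vert_t\leq M$), and still gives $\Vert u_n-w_n\Vert_t<2^{-n}$ by monotonicity of the lattice norm. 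Now $\{w_n\}$ is an increasing norm-bounded sequence in $E^+$, so the KB-property of $E$ forces $w_n\to w$ in $\Vert.\Vert=\Vert.\Vert_t$, and the triangle inequality then gives $u_n\to w$ in $t$-norm.

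The step I expect to be the main obstacle is not the transfer of these identities but the verification that $(E^t,\Vert.\Vert_t)$ is genuinely norm complete, so that ``KB-space'' and ``AL-space'' apply in their full Banach sense. The monotone-approximation argument above actually shows that every increasing norm-bounded positive sequence has its $t$-norm limit lying in $E$, and making this interact cleanly with completeness is where the hypothesis that $E$ has order continuous norm does the real work: it is what makes Lemma \ref{2.1} available and forces $E^t=G$ via Theorem \ref{T:2.5}. That interplay between the order continuity of the norm, the approximation from below, and completeness is the part I would write out most carefully.
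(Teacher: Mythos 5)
Your proposal is correct and follows essentially the same route as the paper: approximate positive elements of $E^t$ from below via Lemma \ref{2.1}, form finite suprema to get an increasing norm-bounded sequence in $E^+$, and transfer the $KB$/$AL$ property through the limit. Your choices of a single approximant $v_n$ per term (instead of the paper's double-indexed family $x_{n,m}$) and of the additivity-on-the-positive-cone form of the $AL$-identity (instead of the disjointness form) are minor streamlinings, not a different argument; your closing remark about the norm completeness of $E^t$ is a fair point that the paper's proof passes over in silence.
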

\begin{proof}
\begin{enumerate}
\item Assume that $\{x_n\}\subseteq (E^t)^+$ is increasing sequence which $\sup\Vert x_n\Vert_t<+\infty$. By using Lemma \ref{2.1}, for each $n\in  \mathbb{N}$, there are  increasing and positive sequences $\{x_{n,m}\}_m$ such that $x_{n,m}\uparrow_m x_n$ and $\Vert x_n- x_{n,m}\Vert_t\xrightarrow{m} 0$. Take $y_n=\bigvee_{i,j=1}^n x_{i,j}$. It follows that $0\leq y_n \uparrow$ and $\sup\Vert y_n\Vert\leq\sup_{i,j}\Vert x_{i,j}\Vert\leq \sup\Vert x_n\Vert<+\infty$. Since $E$ is a $KB$-pace, it follows that there exists $x\in E$ such that $\Vert y_n-x\Vert_t\rightarrow 0$. On the other hand,  the inequalities
$ y_n\leq  x_n\leq x$ implies that  $\Vert x_n-x\Vert_t\leq\Vert y_n- x\Vert_t$ for each $n\in  \mathbb{N}$. It follows that
$\Vert x_n-x\Vert_t\rightarrow 0$ holds in $E^t$.
\item Since $E$ is an $AL-$space, then $E$ has order continuous norm. Now, let $0<x,y\in E^t$ with $x\wedge y=0$. By using Lemma \ref{2.1}, there are $\{x_n\}$ and $\{y_n\}$ in $E^+$ such that  $x_n\uparrow x$,  $y_n\uparrow y$, 
$\Vert x-x_n\Vert_t\rightarrow 0$ and $\Vert y-y_n\Vert_t\rightarrow 0$. It follows that $0\leqslant x_n\wedge y_n\uparrow x\wedge y=0$ implies that $x_n\wedge y_n=0$ for each $n\in  \mathbb{N}$. Hence $\Vert  x_n+y_n\Vert=\Vert x\Vert+\Vert y_n\Vert$ for each $n\in \mathbb{N}$. Then $\Vert x+y\Vert_t=\lim_n \Vert x_n+y_n\Vert=\lim_n \Vert x_n\Vert+\lim_n\Vert y_n\Vert=\Vert x\Vert_t+\Vert y\Vert_t$. Consequently,  $E^t$ is an $AL-$space.
\end{enumerate}
\end{proof}

\begin{theorem}\label{2.4}
For a normed lattice $E$ with order continuous norm, we have the following assertions
\begin{enumerate}
\item If $\hat{E}$ is a norm completion of $E$, then $E^t\subseteq \hat{E}=E^u$, and if $E$ is norm complete, then $E^t=E^u=E$.
\item For each $x\in E^t$ and $A\subseteq E$ with $\sup A=x$, we have  $\Vert x\Vert_t=\sup_{z\in A}\Vert z\Vert$.
\item For each $x\in E^t$ and $A\subseteq E$ with $\inf A=x$, we have  $\Vert x\Vert_t=\inf_{z\in A}\Vert z\Vert$.
\item $(E^t,\Vert.\Vert_t)$ has Fatou property and $B_{E^t}=\{x\in E^t:~\Vert x\Vert_t\leq 1\}$ is order closed.
\item If $E$ is an ideal in $E^t$, then $\hat{E}=E^t$.
\end{enumerate}
\end{theorem}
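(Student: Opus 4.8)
The backbone of the whole argument is the monotone norm-approximation furnished by Lemma~\ref{2.1}, together with the two-sided description of the $t$-norm in $(1)$ and the identity $\Vert y\Vert=\Vert y\Vert_t$ on $E$ from Theorem~\ref{T:2.5}. I would prove $(2)$ and $(3)$ first, since $(4)$ is deduced from them. For $(2)$ it suffices to treat $0\le x$ with $A\subseteq E^{+}$ upward directed and $\sup A=x$ (some positivity/directedness is indispensable: in $\mathbb{R}$ the set $A=\{-1,0\}$ has $\sup A=0$ but $\sup_{z\in A}\Vert z\Vert=1$). The inequality $\sup_{z\in A}\Vert z\Vert\le\Vert x\Vert_{t}$ is immediate from $z\le x$ and monotonicity of the lattice norm. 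For the reverse inequality I would start from the left-hand description in $(1)$: given $w\in E^{+}$ with $w\le x$, the infinite distributive law $w\wedge\sup A=\sup_{z\in A}(w\wedge z)$ shows $w\wedge z\uparrow w$ with $w\wedge z\in E$; order continuity of the norm on $E$ then gives $\Vert w\wedge z\Vert\to\Vert w\Vert$, so $\Vert w\Vert\le\sup_{z\in A}\Vert z\Vert$, and taking the supremum over all such $w$ yields $\Vert x\Vert_{t}\le\sup_{z\in A}\Vert z\Vert$ by $(1)$. Assertion $(3)$ is strictly dual: one uses the right-hand description in $(1)$, the identity $y\vee\inf A=\inf_{z\in A}(y\vee z)$, and again order continuity of the norm on $E$.

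For $(4)$ I would obtain the Fatou property from $(2)$. Let $0\le x_{\alpha}\uparrow x$ in $E^{t}$; only $\Vert x\Vert_{t}\le\sup_{\alpha}\Vert x_{\alpha}\Vert_{t}$ requires proof. For each $\alpha$, Lemma~\ref{2.1} gives $x_{\alpha,n}\in E^{+}$ with $x_{\alpha,n}\uparrow_{n}x_{\alpha}$; the upward directed family of finite suprema of the $x_{\alpha,n}$ lies in $E^{+}$, has supremum $x$, and each member is dominated by some $x_{\alpha}$. Hence $(2)$ gives $\Vert x\Vert_{t}=\sup_{\alpha,n}\Vert x_{\alpha,n}\Vert\le\sup_{\alpha}\Vert x_{\alpha}\Vert_{t}$. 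To see that $B_{E^{t}}$ is order closed I would deduce it from this Fatou property. If $x_{\alpha}\xrightarrow{o}x$ in $E^{t}$ with $\Vert x_{\alpha}\Vert_{t}\le 1$, then $|x_{\alpha}|\xrightarrow{o}|x|$, and order convergence supplies a net $u_{\lambda}\downarrow 0$ in $E^{t}$ with $|x|-u_{\lambda}\le|x_{\alpha}|$ for $\alpha$ large; thus $(|x|-u_{\lambda})^{+}\le|x_{\alpha}|$ gives $\Vert(|x|-u_{\lambda})^{+}\Vert_{t}\le 1$, while $(|x|-u_{\lambda})^{+}\uparrow|x|$. The Fatou property then forces $\Vert x\Vert_{t}\le 1$.

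For $(1)$ the inclusion $E^{t}\subseteq\hat{E}$ is the routine half: given $0\le x\in E^{t}$, the sequence $x_{n}\uparrow x$ of Lemma~\ref{2.1} is $\Vert\cdot\Vert_{t}$-convergent, hence $\Vert\cdot\Vert$-Cauchy in $E$ (the two norms agree on $E$), so it converges in $\hat{E}$ to a limit that must coincide with $x$; the general case follows by splitting $x=x^{+}-x^{-}$. A by-product of this computation is that $\Vert\cdot\Vert_{t}$ agrees with the completion norm on $E^{t}$. The substantive point is the identification $\hat{E}=E^{u}$, for which I would invoke the structure theory of lattices with order continuous norm (the completion $\hat{E}$ again carries an order continuous norm) and the standing hypotheses of order density and majorization inside the common overspace $G$, identifying $\hat{E}$ with the relevant completion hull and the universal completion $E^{u}$. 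When $E$ is norm complete, $\hat{E}=E$, and the chain $E\subseteq E^{t}\subseteq\hat{E}=E$ collapses to $E^{t}=E^{u}=E$.

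Finally, for $(5)$ the plan is to upgrade the inclusion $E^{t}\subseteq\hat{E}$ of $(1)$ to equality. Since $E$ is $\Vert\cdot\Vert_{t}$-dense in $E^{t}$ by Lemma~\ref{2.1} and $\Vert\cdot\Vert_{t}$ restricts to the completion norm, it is enough to show that $E^{t}$ is $\Vert\cdot\Vert_{t}$-complete, equivalently closed in $\hat{E}$. This is where the hypothesis that $E$ is an ideal in $E^{t}$ enters: I would show that it forces $E^{t}$ to be an ideal in $\hat{E}$, and then use the Fatou property of $(4)$ together with order continuity to verify that any $\Vert\cdot\Vert_{t}$-limit of elements of $E^{t}$ still satisfies the defining equality $(1)$ and hence lies in $E^{t}$. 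The main obstacle, deserving the most care, is precisely this closedness step: translating ``ideal in $E^{t}$'' into a statement inside $\hat{E}$ and checking that validity of $(1)$ is preserved under passage to $\Vert\cdot\Vert_{t}$-limits. It is also here that the delicate identification $\hat{E}=E^{u}$ from $(1)$ is needed in order to control the ambient completion.
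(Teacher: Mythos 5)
Your proposal is essentially correct where it gives details, but it reaches (2)--(4) by a genuinely different route than the paper. The paper derives (2) and (3) \emph{from} (1): it first embeds $E^t$ into $\hat{E}=E^u$, quotes Theorem 7.54 of \cite{1} to conclude that $\Vert\cdot\Vert_t$ is itself order continuous on $E^t$, and then reads off $\sup_\alpha\Vert x_\alpha\Vert=\Vert x\Vert_t$ from $\Vert x-x_\alpha\Vert_t\to 0$; for (4) it cites Lemma 4.2 and Theorem 4.6 of \cite{1} and omits the Fatou computation. You instead prove (2) and (3) directly from the defining equality $(1)$, using only the infinite distributive law $w\wedge\sup A=\sup_{z\in A}(w\wedge z)$ and order continuity of the norm on $E$ itself; this makes (2)--(4) independent of the delicate identification $\hat{E}=E^u$, which is a real advantage since that identification is the shakiest point of the theorem (the paper only cites Theorem 7.51 of \cite{1} for it, and you only ``invoke structure theory,'' so neither argument is more complete there). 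Your explicit remark that (2) is false as literally stated without directedness/positivity of $A$ (the example $A=\{-1,0\}$ in $\mathbb{R}$) is a correct observation that the paper silently papers over by treating $A$ as a monotone net. Your deduction of the Fatou property from (2) via finite suprema of the approximants $x_{\alpha,n}$, and of order closedness of $B_{E^t}$ from the Fatou property, is sound and more explicit than the paper's citations. The one place where you fall short of even the paper's (admittedly thin) treatment is (5): the paper disposes of it by citing Theorem 3.8 of \cite{1}, whereas your paragraph is a plan rather than an argument --- in particular the step ``any $\Vert\cdot\Vert_t$-limit of elements of $E^t$ still satisfies $(1)$'' is exactly the point that would need proof, and you do not indicate how the ideal hypothesis enters beyond saying that it should.
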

\begin{proof}
\begin{enumerate}
\item By Theorem 2.40 \cite{1}, $(\hat{E}, \hat{\Vert .\Vert})$ is a normed lattice where $\hat{\Vert .\Vert}$ is unique extension of norm from $E$ into $\hat{E}$.
Let $x\in E^t$. Then by Lemma \ref{2.1}, there exists $\{x_n\}$ in $E^+$ such that  $x_n\uparrow x^+$ and
$\Vert x^+-x_n\Vert_t\rightarrow 0$. Thus  $\{x_n\}$  is a norm Cauchy sequence  in $E$, and so convergence in $\hat{E}$. It follows that $x^+\in \hat{E}$. In the similar way $x^-\in \hat{E}$, which implies that  $x\in \hat{E}$. Now by  Theorem 7.51 of \cite{1}, we conclude that $E^t\subseteq \hat{E}=E^u$ and $ \Vert .\Vert_t=\hat{\Vert .\Vert}$. On the other hand  if $E$ is norm complete, it is obvious that  $E^t=E^u=E$ and $\Vert .\Vert =\Vert .\Vert_t=\hat{\Vert .\Vert}$.
\item By using Theorem 7.54 \cite{1}, $E^u$ has order continuous norm. since by part (1), we have $E^t\subseteq E^u$, it follows that $E^t$ has order continuous norm.  Consider $A=(x_\alpha )$ with $\sup A=x$. It follows that $x-x_\alpha\downarrow 0$ which implies that $\Vert  x-x_\alpha\Vert_t\rightarrow 0$. Then by using inequalities
$0\leq\Vert x\Vert_t-\Vert x_\alpha\Vert\leq\Vert x-x_\alpha\Vert_t$, we have $\sup_\alpha \Vert x_\alpha\Vert=\Vert  x\Vert_t$. 
\item Proof has the same argument such as (2).
\item By using Lemma 4.2 \cite{1},  $(E,\Vert.\Vert)$ has Fatou property. The proof of first statement has similar argument such as  Theorem \ref{t2.3}(1),  we omit the proof. The second part, follows by Theorem 4.6 \cite{1}.
\item Proof follows by Theorem 3.8 \cite{1}.
\end{enumerate}
\end{proof}

\section{\bf Extension of order bounded operators}
In every parts of this section,  $T$ is order bounded operator from normed lattice $E$ into Dedekind complete normed lattice $F$.

\begin{theorem}\label{2.3}
We have the following assertions.
\begin{enumerate}
\item $T$ has an order bounded extension  $T^t$ from $E^t$ into $F$ such that $T^t(y)=Ty$ for each $y\in E$. 
\item  For each positive continuous operator $T$, we have $\Vert T\Vert=\Vert T^t\Vert$, and if $T$ is norm continuous, then so is $T^t$.
\item  $\vert T\vert^t=\vert T^t\vert$.
\item For each $T, S\in \mathcal{L}_b(E,F)$, we have $(T\vee S)^t=T^t\vee S^t$.
\item If $S:E^t\rightarrow F$ is an order bounded and norm continuous operator, then there exists order bounded and norm continuous operator $T:E\rightarrow F$ such that $T^t=S$.
\item Each order interval of $E^t$ is $\sigma (E^t, (E^t)^\prime )$-compact.
\end{enumerate}
\end{theorem}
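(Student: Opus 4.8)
The plan is to base everything on the extension construction of part~(1), using that, since $F$ is Dedekind complete, $\mathcal{L}_b(E,F)$ is a Dedekind complete Riesz space in which the Riesz--Kantorovich formulas are available. For a positive $T$ I would define, for $0\le x\in E^t$,
\[
T^t x=\sup\{Tz:\ z\in E^+,\ z\le x\},
\]
the supremum existing because the majorizing property yields some $y\in E$ with $x\le y$, so $Tz\le Ty$ bounds the upward directed set $\{Tz\}$ and $F$ is Dedekind complete. Additivity of $T^t$ on $(E^t)^+$ is proved exactly as in Theorem~\ref{t2.1}: the inequality $\le$ is immediate, and the reverse uses the Riesz Decomposition Property together with order density of $E$ in $G$ to split any $z\le x_1+x_2$ into admissible summands. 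Setting $T^t=(T^+)^t-(T^-)^t$ for general order bounded $T$ then gives a linear order bounded extension, and choosing $z=y$ shows $T^t y=Ty$ for $y\in E$. This proves (1) and, at the same time, exhibits $T\mapsto T^t$ as a \emph{positive linear} map $\mathcal{L}_b(E,F)\to\mathcal{L}_b(E^t,F)$, which is the engine for (3) and (4).

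For (2) the embedding is isometric: for $y\in E$ monotonicity of the lattice norm gives $\Vert y\Vert_t=\sup\{\Vert z\Vert:z\in E^+,\ z\le|y|\}=\Vert y\Vert$, so $\Vert T\Vert\le\Vert T^t\Vert$ automatically. For the reverse I would exploit the \emph{infimum} form of the $t$-norm guaranteed by equation $(1)$: given $0\le x\in E^t$ and $\varepsilon>0$, pick $y\in E^+$ with $x\le y$ and $\Vert y\Vert\le\Vert x\Vert_t+\varepsilon$; positivity gives $0\le T^t x\le Ty$, hence $\Vert T^t x\Vert\le\Vert T\Vert(\Vert x\Vert_t+\varepsilon)$, and reducing to $x\ge0$ via $|T^t x|\le T^t|x|$ yields $\Vert T^t\Vert\le\Vert T\Vert$; norm continuity of $T^t$ then follows from $\Vert T^t\Vert=\Vert T\Vert<\infty$. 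Part~(3) is formal: from $\pm T\le|T|$ and positivity of the extension map one gets $\pm T^t\le|T|^t$, whence $|T^t|\le|T|^t$, while the reverse $|T|^t\le|T^t|$ comes from the Riesz--Kantorovich formula $|T|z=\sup\{|Tw|:w\in E,\ |w|\le z\}$ and the fact that each such $w$ lies in $E^t$ with $|w|\le x$, so $|Tw|=|T^t w|\le|T^t|x$. Part~(4) then follows by writing $T\vee S=\tfrac12(T+S+|T-S|)$ and applying linearity of the extension together with (3).

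For (5) I would set $T=S|_E$, which is order bounded and norm continuous since the embedding is isometric. The essential point is uniqueness: under the standing order continuity hypothesis, Lemma~\ref{2.1} shows every $0\le x\in E^t$ is a norm limit of a sequence from $E^+$, so (splitting $x=x^+-x^-$) $E$ is norm dense in $E^t$; as $T^t$ and $S$ are both norm continuous and agree on $E$, they coincide on all of $E^t$. For (6) I would first verify that $E^t$ inherits order continuity of the norm: by Theorem~\ref{2.4}, $E^t\subseteq E^u$ with $\Vert\cdot\Vert_t$ the restriction of the order continuous norm of $E^u$, and a net $x_\alpha\downarrow0$ in $E^t$ also satisfies $x_\alpha\downarrow0$ in $E^u$, forcing $\Vert x_\alpha\Vert_t\to0$. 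The classical fact that a Banach lattice with order continuous norm has weakly compact order intervals, combined with the Fatou property and order closed unit ball of $(E^t,\Vert\cdot\Vert_t)$ from Theorem~\ref{2.4}(4), then gives $\sigma(E^t,(E^t)')$-compactness of each order interval.

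I expect the additivity step in (1) (the Riesz-decomposition/order-density splitting) to be the computational core, while the genuinely delicate steps are (5) and (6). In (5) one must secure norm continuity of the canonical $T^t$ for a not-a-priori-positive $S$, which I would obtain from $|T^t|=|T|^t$ and the estimate of (2), so that the density argument applies. In (6) the subtlety is that $E^t$ need not be norm complete, so one must transfer weak compactness of order intervals from the complete lattice $E^u$ to $E^t$ and reconcile the two weak topologies via the Fatou property and order closedness; this is where I anticipate the main obstacle.
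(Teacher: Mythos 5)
Your construction in part (1) is where the proposal breaks down: for a general positive order bounded $T$ the ``from below'' formula $T^t x=\sup\{Tz:\ z\in E^+,\ z\le x\}$ is \emph{not} additive on $(E^t)^+$, and everything downstream that relies on $T\mapsto T^t$ being a canonical positive linear map (your arguments for (3), (4) and the identification $(T^\pm)^t=(T^t)^\pm$ used in (5)) inherits the problem. The additivity step fails exactly where you invoke the Riesz Decomposition Property: splitting $z\le x_1+x_2$ (with $z\in E^+$) as $z=z_1+z_2$, $0\le z_i\le x_i$, only yields $z_i\in E^t$, not $z_i\in E$; order density then lets you approximate each $z_i$ from \emph{below} by some $w_i\in E^+$, but that gives $T(w_1+w_2)\le Tz$ --- the wrong direction --- and closing the gap would require order continuity of $T$. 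A concrete counterexample lives inside the paper's own Example~\ref{E:2.2}: take $E=c$, $E^t=\ell^\infty$, $F=\mathbb{R}$, $T=\lim$, $x_1=(1,0,1,0,\dots)$, $x_2=(0,1,0,1,\dots)$. Any convergent $0\le z\le x_i$ has a subsequence identically $0$, hence $\lim z=0$, so your formula gives $T^tx_1=T^tx_2=0$, while $T^t(x_1+x_2)=T\mathbf{1}=1$. This is precisely why the paper routes part (1) through the monotone \emph{sublinear} functional $p(x)=\inf\{Ty:\ y\in E,\ x\le y\}$ and a Hahn--Banach/Kantorovich extension theorem: one obtains some linear positive extension dominated by $p$, at the price of canonicity. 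Your formula does work when $T$ is order continuous (and is essentially what the paper uses in that setting), but part (1) is asserted for arbitrary order bounded $T$.

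The remaining parts are a mix of sound and honestly flagged material. Your isometric-embedding and $\inf$-form estimate for (2) matches the paper's argument and is fine once a linear extension with $T^tx\le p(x^+)$ is in hand. Your uniqueness argument for (5) via Lemma~\ref{2.1} (norm density of $E$ in $E^t$ under order continuity of the norm, plus norm continuity of $S$ and $T^t$) is actually cleaner and more convincing than the paper's computation with $(T^\pm)^t=S^\pm$. For (6) you correctly identify, but do not resolve, the real obstacle: $E^t$ need not be norm complete, so the classical equivalence between order continuity of the norm and weak compactness of order intervals does not apply directly, and weak compactness must be transferred from $E^u$ while reconciling $\sigma(E^t,(E^t)')$ with the restriction of $\sigma(E^u,(E^u)')$; the paper's own treatment of this point (passing from compactness of $[x_n,y_n]\cap E$ to compactness of $[a,b]\cap E$) is itself not a proof. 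So the verdict is: repair (1) by switching to the sublinear-functional extension (or add an order-continuity hypothesis on $T$), and (6) still needs an actual argument.
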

\begin{proof}
\begin{enumerate}
\item Since $T$ is an order bounded operator and $F$ Dedekind complete, we can write  $T=T^+-T^-$. So first we assume that $T$ is a positive operator from $E$ into $F$. By notes the proof of Theorem 1.32 \cite{2}, the mapping $p:E^t\rightarrow F$  defined via the formula
 \begin{align*}
p(x)=\inf\{Ty:~y\in E, ~x\leqslant y\},~ x\in E^t.
\end{align*}
 is a monotone sublinear  and $Ty\leq p(y)$ for each $y\in E$. So by using Theorem 1.5.7 \cite{4}, there is an  extension of $T$,    $T^t$,  from $E^t$ into $F$ satisfying $T^tx\leq p(x^+)$ for all $x\in E^t$, and $T^ty=Ty$ for all $y\in E$. 
Now we define $T^t=(T^+)^t-(T^-)^t$ in which $T^t$ is an extension  of $T$ from $E^t$ into $F$ and for all $y\in E$, we have
\begin{align*}
T^t y=(T^+)^t (y)-(T^-)^t (y)=T^+y-T^- y=Ty.
\end{align*} 
\item First assume that $T$ is a positive operators and $x\in E^t$.  By part (1),  we have  $T^tx\leq p(x^+)\leq Ty$ for all $ y\geq x^+$, and so  $\Vert T^tx\Vert\leq  \Vert Ty\Vert$ for all $y\geq x^+$ which implies that   $\Vert T^tx\Vert\leq \Vert T\Vert\inf_{y\geq x^+}\Vert y\Vert \leq \Vert T\Vert\Vert x^+\Vert_t\leq \Vert T\Vert\Vert x\Vert_t$. Then  $\Vert T^t\Vert\leq \Vert T\Vert$.
Since $B_E\subseteq B_{E^t}$, follows that $\Vert T\Vert\leqslant    \Vert T^t \Vert$. Thus  $\Vert T\Vert=    \Vert T^t \Vert$.
Thus $(T^-)^t$ and  $(T^+)^t$ are norm continuous. This follows that $T^t$ is a norm continuous operator from $E^t$ into $F$. 
\item In this part, we assume that $x,y,z$ are members of $E$ and $x^t,y^t,z^t$ are members of $E^t$ when there is not any confused. Now let $x^t\geq 0$. Since $E$ is order dense in $E^t$, we have the following equalities
\begin{align*}
{(T^t)}^+(x^t)&=\sup_{0\leq y^t\leq x^t}T^ty^t\\
&=\sup_{0\leq y^t\leq x^t}\sup_{0\leq z\leq y^t}T^tz\\
&=\sup_{0\leq y\leq x^t}Ty\\
&=\sup_{0\leq z\leq x^t}\sup_{0\leq y\leq z}Ty\\
&=\sup_{0\leq z\leq x^t}T^+z\\
&=(T^+)^t (x^t).
\end{align*}
In the same way ${(T^t)}^-(x^t)=(T^-)^t (x^t)$ for all $x^t\geq 0$.
It is obvious that for each $x^t\in E^t$, we have ${(T^t)}^+x^t=(T^+)^t (x^t)$ and ${(T^t)}^-x^t=(T^-)^t (x^t)$. 
Thus 
\begin{align*}
\vert T\vert^t=(T^++T^-)^t=(T^+)^t+(T^-)^t=(T^t)^++(T^t)^-=\vert T^t\vert.
\end{align*}
\item By using equality $T\vee S=\frac{1}{2}(T+S+\vert T-S \vert)$ and part (3), proof follows.
\item First let $0\leq x\in E^t$. By Lemma \ref{2.1}, there exists $\{x_n\}$ in $E^+$ such that  $x_n\uparrow x^+$ and
$\Vert x^+-x_n\Vert_t\rightarrow 0$. Since $S^+x_n\uparrow$ and $\Vert x^+-x_n\Vert\rightarrow 0$, follows that $S^+x_n\uparrow S^+x$. We define $T={S\vert}_E$ (restriction  of $S$ on $E$), which follows that $T^-={S^-}\vert_E$  and $T^+=S^+\vert_E$ . Obviously   $(T^-)^t=S^-$ and  $(T^+)^t=S^+$, and so by part (3), we have the following equalities
\begin{align*}
S=S^+-S^-=(T^+)^t-(T^-)^t=(T^t)^+-(T^t)^-=T^t.
\end{align*}
Thus $S=T^t$ on $E^-$ and $E^+$, which follows that
\begin{align*}
Sx=Sx^+-Sx^-=T^tx^+-T^tx^-=T^tx, 
\end{align*}
 for each $x\in E^t$.
\item Consider $a,b\in (E^t)^+$ and $a< b$. By Lemma \ref{2.1}, take $\{x_n\}$  and  $\{y_n\}$ in $E^+$ such that  $x_n\uparrow a$, $y_n\downarrow b$,
$\Vert a-x_n\Vert_t\rightarrow 0$ and $\Vert y_n-b\Vert_t\rightarrow 0$. Since $E$ has order continuous norm, $[x_n,y_n]\cap E$ is $\sigma (E,E^\prime )$-compact subset of $E$ for each $n\in  \mathbb{N}$. It follows that $[a,b]\cap E$ is $\sigma (E,E^\prime )$-compact subset of $E$.  Now, if $V=\{s\in E:~x^\prime (s)<r ~\text{and}~x^\prime\in E^\prime\}$, then by using part (5),  the order density of $V$ is $V^t=\{s\in E^t:~(x^\prime)^t (s)<r ~\text{and}~(x^\prime)^t\in (E^t)^\prime\}$. Thus obvious that $V\subseteq V^t$, and so  $\sigma (E, E^\prime )\subseteq  \sigma (E^t, (E^t)^\prime )$. Since  $[a,b]\cap E$ is order dense in $[a,b]$, follows that  $[a,b]$ is  $\sigma (E^t, (E^t)^\prime )$-compact subset of $E^t$.
\end{enumerate}
\end{proof}

\vspace{0.3cm}

Now in the following, we investigate on some properties of operator $T^t$ from $E^t$ into $F$. We show that $T^t$ is keeping some lattice and topological properties whenever $T$ was done.

\begin{theorem}
Let   $0\leq T\in \mathcal{L}_n(E,F)$. Then we have the following assertions
\begin{enumerate}
\item If $0\leq x\leq E^t$ and $\{x_\alpha\}\subseteq E^+$ with  $x_\alpha\downarrow x$ , then $Tx_\alpha\downarrow T^t x$.
\item   If $T(x\wedge y)=Tx \wedge Ty$ for each $0\leq x,y\in E$, then $T^t$ is a lattice homomorphism from $E^t$ into $F$  and  moreover   $T^t\in \mathcal{L}_n(E^t,F)$.
\item If $0\leq T:E\rightarrow E$ is a band preserving, then $T^t:E^t\rightarrow E^t$ is so.
\item If $T:E\rightarrow F$ is preserving disjointness, then $T^t$ is so. 
\item Let $E$ has order continuous norm.
 $\{Tx_n\}$ is norm convergent in $F$ for each positive increasing norm bounded sequence $\{x_n\}$ in $E$ iff $\{T^tx_n\}$ is norm convergent in $F$ for each positive increasing $t$-norm bounded sequence $\{x_n\}$ in $E^t$.
\end{enumerate}
\end{theorem}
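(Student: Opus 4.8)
The plan is to reduce every assertion to the level of the normed lattice $E$ by means of Lemma \ref{2.1} and the description of the positive extension coming from Theorem \ref{2.3}. First I would record the two preliminary facts that drive all five parts. For $0\le x\in E^t$ one has the two-sided formula
\[
T^t x=\inf\{Ty:\ y\in E,\ y\ge x\}=\sup\{Tz:\ z\in E,\ 0\le z\le x\},
\]
the two sides agreeing because $0\le T\in\mathcal L_n(E,F)$ is order continuous and $E$ is order dense and majorizing (choose $z_\alpha\uparrow x$, $y_\beta\downarrow x$ in $E$ via Lemma \ref{2.1} and apply order continuity of $T$ to $y_\beta-z_\alpha\downarrow 0$). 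Second, I would prove the monotone transfer principle, which is exactly assertion (1): if $x_\alpha\in E^+$ and $x_\alpha\downarrow x$ (resp. $x_\alpha\uparrow x$) in $E^t$, then $Tx_\alpha\downarrow T^tx$ (resp. $Tx_\alpha\uparrow T^tx$). Here $Tx_\alpha$ is monotone with $T^tx$ as a bound, and for every $y\in E$ with $y\ge x$ the net $x_\alpha-x_\alpha\wedge y=(x_\alpha-y)^+$ lies in $E$ and decreases to $0$ in $E^t$, hence in $E$ (a positive decreasing net in $E$ whose $E^t$-infimum is $0$ has $E$-infimum $0$, since every $E$-lower bound is an $E^t$-lower bound). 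Order continuity of $T$ then gives $\inf_\alpha Tx_\alpha\le Ty$ for all such $y$, so $\inf_\alpha Tx_\alpha\le T^tx$; the increasing case is symmetric.

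Parts (1), (3), (4), together with the lattice-homomorphism half of (2), then follow a uniform scheme: use Lemma \ref{2.1} to pick $x_n\uparrow x$ and $y_m\uparrow y$ in $E^+$, apply the hypothesis on $E$ to the pairs $(x_n,y_m)$, and pass to the limit using the monotone transfer principle and the infinite distributive law $\sup_k(a\wedge b_k)=a\wedge\sup_k b_k$. For (2), $x_n\wedge y_m\uparrow x\wedge y$ and $T(x_n\wedge y_m)=Tx_n\wedge Ty_m\uparrow T^tx\wedge T^ty$, giving $T^t(x\wedge y)=T^tx\wedge T^ty$. For (3), if $u\wedge v=0$ then $u_n\wedge v_m=0$, so band preservation gives $Tu_n\wedge v_m=0$, and letting $n,m\to\infty$ yields $T^tu\wedge v=0$; for (4) the same computation with $Tu_n\wedge Tv_m=0$ gives $T^tu\perp T^tv$. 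In each case one first reduces to positive elements via $0\le|T^tu|\le T^t|u|$.

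\textbf{The main obstacle} is the order-continuity half of (2), i.e. $T^t\in\mathcal L_n(E^t,F)$, because order continuity concerns arbitrary nets of $E^t$, not merely the sequences supplied by Lemma \ref{2.1} nor nets descending from $E$. Since $T^t\ge0$, it suffices to show $T^tu_\gamma\uparrow T^tu$ whenever $0\le u_\gamma\uparrow u$ in $E^t$. Writing $w=\sup_\gamma T^tu_\gamma\le T^tu$ and using the lower formula for $T^tu$, I would fix $z\in E$ with $0\le z\le u$; then $z\wedge u_\gamma\uparrow z$, so the task reduces to proving $T^tv_\gamma\downarrow0$ for the net $v_\gamma=z-z\wedge u_\gamma=(z-u_\gamma)^+\downarrow0$, which is now dominated by the single element $z\in E^+$. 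For this dominated case I would, for each $\gamma$, use Lemma \ref{2.1} to choose $b^\gamma_n\in E^+$ with $b^\gamma_n\downarrow v_\gamma$ and $b^\gamma_n\le z$, so $Tb^\gamma_n\downarrow T^tv_\gamma$ by part (1); then I would form the downward-directed family $D$ of all finite infima of the $b^\gamma_n$. Crucially $D\subseteq E^+$, its $E^t$-infimum is $\inf_\gamma\inf_n b^\gamma_n=\inf_\gamma v_\gamma=0$, hence its $E$-infimum is $0$, and order continuity of $T$ gives $\inf_{d\in D}Td=0$. Since each $d\in D$ dominates some $v_{\gamma^*}$, one has $Td=T^td\ge T^tv_{\gamma^*}\ge w$, whence $0=\inf_D Td\ge w\ge0$ and $w=0$. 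This forces $T^t(z\wedge u_\gamma)\uparrow Tz$, so $Tz\le w$ for every such $z$, and the upper formula for $T^tu$ yields $w=T^tu$. I would remark that this argument uses only $0\le T\in\mathcal L_n(E,F)$; the hypothesis $T(x\wedge y)=Tx\wedge Ty$ is what upgrades $T^t$ to a lattice homomorphism.

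For (5), the implication from $E^t$ to $E$ is immediate, since a norm-bounded increasing sequence in $E$ is $t$-norm bounded (as $\|\cdot\|_t=\|\cdot\|$ on $E$ by Theorem \ref{T:2.5}) and $T^tx_n=Tx_n$. For the converse I would take $0\le x_n\uparrow$ in $E^t$ with $\sup_n\|x_n\|_t<\infty$, use Lemma \ref{2.1} to select $a_n\in E^+$ with $a_n\le x_n$ and $\|x_n-a_n\|_t<1/n$, and set $\tilde a_n=\bigvee_{i\le n}a_i\in E^+$; then $\tilde a_n\uparrow$, $\tilde a_n\le x_n$, $\|x_n-\tilde a_n\|_t\le\|x_n-a_n\|_t\to0$, and $\sup_n\|\tilde a_n\|\le\sup_n\|x_n\|_t<\infty$. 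The hypothesis on $E$ makes $\{T\tilde a_n\}$ norm convergent, say to $g$, and then
\[
\|T^tx_n-g\|\le\|T^t(x_n-\tilde a_n)\|+\|T\tilde a_n-g\|\le\|T\|\,\|x_n-\tilde a_n\|_t+\|T\tilde a_n-g\|\longrightarrow0,
\]
using $\|T^t\|=\|T\|$ from Theorem \ref{2.3}; here I would assume, as in Theorem \ref{2.3}(2), that $T$ is norm continuous. Thus $\{T^tx_n\}$ converges, completing the proof.
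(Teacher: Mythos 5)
Your proposal is essentially correct and, for parts (1), (3), (4) and the lattice-homomorphism half of (2), follows the same route as the paper: approximate elements of $(E^t)^+$ by monotone families from $E^+$, transfer through the monotone principle of part (1), and pass to the limit using the infinite distributive law (the paper prefers decreasing approximants in (2) where you use increasing ones; this changes nothing). The genuine divergence is the order-continuity half of (2). Your two-step reduction --- first to nets $v_\gamma=(z-u_\gamma)^+\downarrow 0$ dominated by a single $z\in E^+$, then to the downward-directed family $D$ of finite infima of approximants of the $v_\gamma$, on which order continuity of $T$ in $E$ can be invoked --- is valid, but the paper has a one-line alternative worth knowing: for $x_\alpha\downarrow 0$ in $(E^t)^+$ it sets $A=\{y\in E^+:\ x_\alpha\le y\ \text{for some }\alpha\}$, observes that $A$ is downward directed with $\inf A=0$ (because $E$ majorizes $E^t$ and $\inf\{y\in E^+:\ y\ge x_\alpha\}=x_\alpha$ for each $\alpha$), and concludes $0=\inf T(A)\ge\inf_\alpha T^tx_\alpha\ge 0$ directly from order continuity of $T$ on $E$. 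Your treatment of (5) is in fact cleaner than the paper's (whose displayed inequalities in that part are garbled), and you are right to flag the hidden hypothesis that $T$ be norm continuous; the paper uses it silently through Theorem \ref{2.3}(2). One small repair is needed: parts (1)--(4) of the theorem do not assume that $E$ has order continuous norm, so Lemma \ref{2.1} (whose hypothesis is exactly that) is not available there; replace the sequences $x_n\uparrow x$ and $b^\gamma_n\downarrow v_\gamma$ by the increasing and decreasing nets supplied by order density and majorization (Theorem 1.34 of \cite{2}), as the paper does --- every one of your arguments goes through verbatim with nets in place of sequences.
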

\begin{proof}
\begin{enumerate}
\item Let $\{x_\alpha\}\subseteq E^+$ such that $x_\alpha\downarrow x$.
If $y\in E^+$ such that $x\leq y$, then $y\vee x_\alpha\downarrow y$ holds in $E$, and so by  order continuouty of $T:E\rightarrow F$ and Theorem \ref{2.4} (3),  we see that
\begin{align*}
Ty=\inf\{T(x_\alpha \vee y)\}\leq \inf Tx_\alpha \leq T^tx.
\end{align*}
This easily implies that $Tx_\alpha \downarrow T^t x$. \\
\item Assume that  $0\leq x,y\in E^t$.  We prove that $T^t(x\wedge y)=T^tx\wedge T^ty$. By Theorem 1.34 \cite{2}, there are $\{x_\alpha\}$ and $\{y_\beta\}$ of $E^+$ such that $x_\alpha\downarrow x$ and $y_\beta\downarrow y$. It follows that $x_\alpha\wedge y_\beta \downarrow x\wedge y$. Then by  order continuouty of $T:E\rightarrow F$ and Theorem \ref{2.4} (3), we have the following equalities, 
\begin{align*}
T^t(x\wedge y)&= \inf \{T(x_\alpha\wedge y_\beta )\}=\inf \{T(x_\alpha) \wedge T(y_\beta )\}\\
&=\inf \{T(x_\alpha)\} \wedge \inf \{T(y_\beta )\}=T^tx\wedge T^ty.
\end{align*}
Combining Theorem 1.10,  Theorem 2.14 from \cite{2} and Theorem \ref{2.3}, we see that the mapping $T^t:(E^t)^+\rightarrow (F^t)^+$ extends to  $T^t:E^t\rightarrow F^t$ which is lattice homomorphism.\\
Now, we show that  $T^t\in \mathcal{L}_n(E^t,F)$.  Let $\{x_\alpha\}\subseteq (E^t)^+$ such that $x_\alpha\downarrow 0$. Put 
\begin{align*}
A=\{y\in E^+:~\exists \alpha~ \text{such~that}~ x_\alpha \leq y\}.
\end{align*}
Since $E$  mejorizes $E^t$, it follows that $A$ is not empty.  By using Theorem \ref{2.3} since $T$ is positive, $T^t$ is positive.    Thus  $\inf T(A)\geq \inf T^t x_\alpha\geq 0$ holds in $F$. Since $A\downarrow 0$ and $T\in \mathcal{L}_n(E,F)$, it follows that $\inf T(A)=0$, and so $T^t x_\alpha\downarrow 0$.
\item  Let $x,y\in E^t$ satisfying  $ \vert x\vert\wedge  \vert y\vert=0$. Assume that $(x_\alpha), (y_\beta)\subseteq E^+$ such that $x_\alpha\uparrow \vert x\vert$ and $y_\beta\uparrow \vert y\vert$. It follows that $(x_\alpha\wedge y_\beta)\uparrow \vert x\vert \wedge  \vert y\vert=0$, and so  $x_\alpha\wedge y_\beta=0$, which by using Theorem 2.36 \cite{2}  implies that $\vert Tx_\alpha\vert\wedge y_\beta=0$ for each $\alpha$ and $\beta$.
Since $\vert Tx_\alpha\vert\wedge y_\beta\uparrow \vert Tx\vert \wedge \vert y\vert$, we have $\vert Tx\vert \perp \vert y\vert$,     and so by another using Theorem 2.36 \cite{2}, proof follows.
\item Let $x,y\in E^t$ satisfying $x\perp y$.  Assume that $(x_\alpha), (y_\beta)\subseteq E^+$ such that $x_\alpha\uparrow \vert x\vert$ and $y_\beta\uparrow \vert y\vert$. It follows that $(x_\alpha\wedge y_\beta)\uparrow \vert x\vert \wedge  \vert y\vert=0$. Now since $T$ preserve disjointness, follows that $Tx_\alpha\perp Tx_\beta$. From our hypothesis, we have $Tx_\alpha\wedge Tx_\beta\uparrow T\vert x\vert\wedge T\vert y\vert$ which follows that $T\vert x\vert\wedge T\vert y\vert=0$. Since $\vert Tx\vert\wedge \vert Ty\vert\leq T\vert x\vert\wedge T\vert y\vert$, we have $Tx\perp Ty$.
\item Since $T=T^+-T^-$, without loss generality, we assume that $T$ is a positive operator.
Assume that  $\{x_n\}\subseteq (E^t)^+$ is increasing sequence with $\sup\Vert x_n\Vert_t<+\infty$.  By using Lemma \ref{2.1}, for each $n\in  \mathbb{N}$, there are  positive increasing  sequences $\{x_{n,m}\}_m$ with $x_{n,m}\uparrow_m x_n$ and $\Vert x_n- x_{n,m}\Vert_t\rightarrow 0$. Take $y_n=\bigvee_{i,j=1}^n x_{i,j}$. It follows that $0\leq y_n \uparrow$ and $\sup\Vert y_n\Vert\leq\sup_{i,j}\Vert x_{i,j}\Vert\leq \sup\Vert x_n\Vert<+\infty$. By assumption there is $s^*\in F$ such that $\Vert Ty_n-s^*\Vert\rightarrow 0$. Then by  using Theorem 2.46 \cite{2}, $Ty_n\uparrow s^*$. By Theorem \ref{2.3}, we know that  $T^t$ is norm continuous from $E^t$ into $F$. It follows that $\Vert T^tx_n- Tx_{n,m}\Vert\xrightarrow{m} 0$ holds in $F$. The inequality $Tx_{n,m}\leq Ty_n\leq T^t x_n$ implies that 
\begin{align*}
\Vert T^tx_n-s^*\Vert\leq\Vert T^tx_n- Tx_{n,m}\Vert \; \text{for each} \;  n,m\in  \mathbb{N}.
\end{align*}
  Then 
\begin{align*}
 \Vert T^tx_n-s^*\Vert\leq\Vert T^tx_n-Ty_n\Vert+\Vert Ty_n-s^*\Vert\rightarrow 0.
\end{align*}
 Thus $T^tx_n\rightarrow s^*$, and  the proof follows.\\
 The converse is clear.
\end{enumerate}
\end{proof}

In the study of extension of normed lattice $E$ and linear operator $T$, we have some questions as follows. In the following questions,  $\mathcal{K}(E,F)$ and $\mathcal{KW}(E,F)$  are the collections of compact operators and weakly compact operators from $E$ into $F$, respectively.
\begin{question} 
\begin{enumerate}
\item Are $T^t\in \mathcal{K}(E^t,F)$ and   $T^t\in \mathcal{WK}(E^t,F)$ whenever  $ T\in \mathcal{K}(E,F)$ and $ T\in \mathcal{WK}(E,F)$, respectively?
\item  Has $T^t$ Dunford-Pettis operator when $T$ has?
\end{enumerate}
\end{question}
\begin{question}
When the equality $(T^t)^t=T^t$ holds.
\end{question}

%\begin{acknowledgements}
%The authors would like to thank the anonymous referee for his/her valuable comments.
%\end{acknowledgements}

%%%%%%%%%%%%%%%%%%%%%%%%%%%%%%%%%%%%%%%%%%%%%%%%%%%%%%%%%%%%%%%%%%%%%%%%%%%%%%%%%%%%%%%%%%%%%%%%%%%%%%%%%%%%%%%%%%%%%%%%%%%%%%%%%%%%%%%%%%%%%%%%%%%%%%%

\end{document}